\newtheorem{theorem}{Theorem}[section]
\numberwithin{equation}{section}
\newtheorem{definition}[theorem]{Definition}
\newtheorem{example}[theorem]{Example}
\newtheorem{lemma}[theorem]{Lemma}
\newenvironment{proof}[1][Proof]{\textbf{#1.}}{\ \rule{0.5em}{0.5em}}%
\begin{document}
\parindent 9mm
\title{Existence of mild solutions for Riemann-Liouville fractional differential
equations with nonlocal conditions
\thanks{This work was
supported by the Natural Science Foundation of China (Grant No.
11301412 and 11131006), and the Fundamental Research Funds for the Central Universities (Grant No. 2012jdhz52)}
\thanks{2010 Mathematics Subject Classification. Primary: 34A08; Secondary: 47D06.}}
\author{Zhan-Dong Mei \thanks{Corresponding
author, School of Mathematics and Statistics, Xi'an Jiaotong
University, Xi'an 710049, China; Email: zhdmei@mail.xjtu.edu.cn}  \ \ \
Ji-Gen Peng \thanks{School of Mathematics and Statistics, Xi'an Jiaotong
University, Xi'an 710049, China; Email: jgpeng@mail.xjtu.edu.cn} }


\date{}
\maketitle \thispagestyle{empty}
\begin{abstract}
%
%
In this paper, we are concerned with the mild solutions of
Riemann-Liouville fractional differential equations with nonlocal
conditions in Banach space. We use Banach contraction principle to
prove the existence and uniqueness. Moreover, we derive the
existence by using Krasnoselkii's theorem.
An illustrative example is presented.
\vspace{0.5cm} 

%
%
\noindent {\bf Key words:} Riemann-Liouville fractional differential
equations; fractional resolvent; mild solution.

\end{abstract}


\section{Introduction}

Consider inhomogeneous abstract Riemann-Liouville fractional differential equations
described by
\begin{align}\label{equ}
  \left\{
     \begin{array}{ll}
       D^\alpha_t u(t)=A u(t)+f(t,t^{1-\alpha}u(t),(Ku)(t)), & \hbox{ } t\in(0,T],\\
       \tilde{u}(0)=x+g(u), & \hbox{ }
     \end{array}
   \right.
\end{align}
where $0<\alpha\leq 1$, $u(\cdot)$ is the state, $A:D(A)\subset
X\rightarrow X$ is a closed and densely defined linear operator,
$(X,\|\cdot\|)$ is a Banach space, $D(A)$ is the domain of $A$
endowed with the graph norm $\|\cdot\|_{D(A)}=\|\cdot\|+\|A\cdot\|$,
$D_t^\alpha$ is the $\alpha$-order Riemann-Liouville fractional
derivative operator, $(Ku)(t)=\int_0^tr(t,s)u(s)ds$, $f:[0,T]\times
X\times X\rightarrow X$, $\tilde{u}(0)=\lim_{t\rightarrow 0^+}\Gamma(\alpha)t^{1-\alpha}u(t)$, $x\in X$, $g$ is a function from a certain function space on $X$ to $X$.

Fractional differential equations have received increasing attention
because the behavior of many physical systems, such as fluid flows,
electrical networks, viscoelasticity, chemical physics,
electron-analytical chemistry, biology, control theory, can be
properly described by using the fractional order system theory etc.
(see \cite{Eidelman2004,Lakshmikantham2009,
Meerschaert2009,Metzler2001,Podlubny1999}). Fractional derivatives
appear in the theory of fractional differential equations; they
describe the property of memory and heredity of materials, and it is
the major advantage of fractional derivatives compared with integer
order derivatives. Many of the references on fractional differential
equations were focused on the existence and/or uniqueness of
solutions for fractional differential equations
\cite{Bazhlekova2001,Bazhlekova2012,Eidelman2004,Fan2012,Ibrahim2011}.

The nonlocal Cauchy problem, an initial problem for the
corresponding equations with nonlocal initial data, was first
studied by Byszewski \cite{Byszewski1991}. Such problem has better
effects than the normal Cauchy problem with the classical initial
data because nonlocal condition can be applied in physics with
better effect in applications than the classical initial condition
since nonlocal conditions are usually more precise for physical
measurements than the classical initial condition (cf., e.g.,
\cite{Anguraj2009,Byszewski1991,Fu2004,Gatsori2004,Liang2009,Lin1996,Nguerekata2006,
Xiao2003} and references therein). Very recently, the existence and
uniqueness of solutions of Caputo fractional abstract differential
equations with a nonlocal initial condition were discussed by some
references (cf., e.g., Anguraj e.t. \cite{Anguraj2009}, Balachandran
e.t. \cite{Balachandran2009}, Li, e.t. \cite{Li2012}, Zhou, e.t.
\cite{Zhou2010}). N'Guerekata \cite{Nguerekata2009} studied the mild
solutions of fractional differential equations with nonlocal
conditions related to Riemann-Liouville derivative, which results in singularity at zero.
However, Li, Peng and Gao \cite{Li2012aa} pointed out that the
definition of the mild solution in \cite{Nguerekata2009} is
incorrect and the similar situation can be found in \cite{Hu2009}.
Motivated by this, in this paper, we will use
fractional resolvent developed by Li and Peng \cite{Li2012a} and
introduce a new norm to study the existence and uniqueness of
equation (\ref{equ}).

The arrangement of this paper is as follows. Sec. 2 is to introduce
some related preliminaries. In Se. 3, Banach
contraction principle is used to prove the existence and uniqueness and Krasnoselkii's theorem is used to derive the existence of the mild solutions of (\ref{equ}).
\section{Preliminaries}

Let $(X,\|\cdot\|)$ be a Banach space. For $q\geq1$, $L^q((0,T); X)$
denotes the space of all $X$-valued functions $u:(0,T)\rightarrow X$
with the norm $\|u\|_{L^{q}((0,T);X)}=
\bigg(\int_{0}^T\|u(t)\|^qdt\bigg)^{\frac{1}{q}}$. Denote by
$C_{1-\alpha}([0,T],X)$ all the functions such that $t\mapsto
t^{1-\alpha}u(t)$ is continuous on $[0,T]$ with the norm
$\|u\|_{C_{1-\alpha}([0,T];X)}=\sup_{t\in[0,T]}\|t^{1-\alpha}u(t)\|$.
Obviously, $L^{q}((0,T);X)$ and $C_{1-\alpha}([0,T],X)$ are Banach
spaces. Let $n\in N,\ 1\leq q<\infty$. Let $I=(0,T)$, or $I=[0,T]$,
or $I=(0,\infty)$. The Sobolev spaces $W^{n,p}(I; X)$ is defined as
follows (\cite[Appendix]{Brezis1973}):
\begin{equation*}
W^{n,p}(I; X)=\{u|\ \exists \varphi\in L^{p}(I;
X):u(t)=\sum_{k=0}^{n-1}c_{k}\frac{t^{k}}{k!}+\frac{t^{n-1}}{(n-1)!}\ast
\varphi(t),\ t\in I\}.
\end{equation*}
In this case, we have $\varphi(t)=u^{(n)}(t),\ c_{k}=u^{(k)}(0)$.\\\\

For the convenience of the readers, we shall introduce some
definitions and some fundamental properties of fractional calculus
theory, which can be fund in
\cite{Hilfer2000,Lakshmikantham2009,Podlubny1999, Srivastava2009}.

\begin{definition}
 For any $u\in L^{1}((0,T);X)$, the $\alpha$-order Riemann-Liouville
fractional integral of $u$ is defined by
\begin{equation}\label{0}
J_{t}^{\alpha}u(t)=\frac{1}{\Gamma(\alpha)}\int_{0}^{t}(t-\tau)^{\alpha-1}u(\tau)d\tau.
\end{equation}
\end{definition}
We denote $J_{t}^{0}u(t)=u(t)$. Obviously, the fractional integral
operators $\{J^\alpha_t\}_{\alpha\geq0}$ satisfies the semigroup
property $ J^\alpha_t J^\beta_t=J^{\alpha+\beta}_t,\  \alpha,
\beta\geq0. $

\begin{definition}
Let $\alpha\in(0,1)$. The the $\alpha$-order Riemann-Liouville
fractional derivative of $u$ is defined by
\begin{align*}
    D_t^\alpha
    u(t)=\frac{1}{\Gamma(1-\alpha)}\frac{d}{dt}\int_0^t(t-\sigma)^{-\alpha}u(\sigma)d\sigma.
\end{align*}
\end{definition}

\begin{definition}\cite{Li2012a}\label{de}
 Let $0<\alpha< 1$. A family $\{T(t)\}_{t>0}$ of bounded linear operators on Banach space $X$ is called an
 $\alpha$-order fractional resolvent
 if it satisfies the following assumptions:

(P1) for any $x\in X$, $\ T(\cdot)x\in C((0,\infty),X)$, and
\begin{equation}\label{clear}
\lim_{t\rightarrow 0+}\Gamma(\alpha)t^{1-\alpha}T(t)x=x \ \
\mbox{for all } \ x\in X;
\end{equation}

(P2) $T(s)T(t)=T(t)T(s) \ \ \mbox{for all } t,s\geq0;$

(P3) for all $t,s>0$, there holds
\begin{equation}\label{fi}
\ T(t) J_{s}^{\alpha}T(s)-
J_{t}^{\alpha}T(t)T(s)=\frac{t^{\alpha-1}}{\Gamma(\alpha)}J_{s}^{\alpha}T(s)
-\frac{s^{\alpha-1}}{\Gamma(\alpha)}J_{t}^{\alpha}T(t),
\end{equation}
where $J_t^\alpha$ is $\alpha$-order Riemann-Liouville fractional
integral operator.
\end{definition}

The generator $A$ of fractional resolvent $\{T(t)\}_{t> 0}$ is
defined by $$D(A)=\{x\in X: \mbox{the}\ \mbox{limt} \
\lim_{t\rightarrow
0^+}\frac{t^{1-\alpha}T(t)x-\frac{x}{\Gamma(\alpha)}}{t^\alpha}\
\mbox{exists}\},$$ and
$$Ax=
\lim_{t\rightarrow
0^+}\frac{t^{1-\alpha}T(t)x-\frac{x}{\Gamma(\alpha)}}{t^\alpha}.
$$

\begin{theorem}\cite{Li2012a}\label{Lith}
Let $\{T(t)\}_{t> 0}$ be a fractional resolvent and $A$ its
generator. Then, we have\\

 (a) For any $x\in X$, $\int_0^t
\frac{(t-s)^{\alpha-1}}{\Gamma(\alpha)}T(s)xds\in D(A)$ and
\begin{equation}\label{c1}
T(t)x=\frac{t^{\alpha-1}}{\Gamma(\alpha)}x+A\int_0^t
\frac{(t-s)^{\alpha-1}}{\Gamma(\alpha)}T(s)xds, \ t>0;
\end{equation}

(b) For any $x\in D(A)$,
\begin{equation}\label{c2}
T(t)x=\frac{t^{\alpha-1}}{\Gamma(\alpha)}x+\int_0^t
\frac{(t-s)^{\alpha-1}}{\Gamma(\alpha)}T(s)Axds, \ t>0;
\end{equation}

\end{theorem}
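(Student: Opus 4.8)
The plan is to treat this as the fractional analogue of the two fundamental identities for $C_0$-semigroups, with the translation property $T(h)\int_0^t T(s)x\,ds=\int_0^t T(s+h)x\,ds$ replaced by the commutation relation (P3). Throughout I would write $S(t):=J_t^\alpha T(t)$, so that $S(t)x=\int_0^t\frac{(t-s)^{\alpha-1}}{\Gamma(\alpha)}T(s)x\,ds$ is exactly the operator appearing in (\ref{c1}). First I would record two preliminary facts. (i) By (P1) the map $s\mapsto s^{1-\alpha}T(s)x$ extends continuously to $[0,t]$, so $T(\cdot)x$ is dominated near $0$ by an integrable multiple of $s^{\alpha-1}$; moreover, by the uniform boundedness principle, $\sup_{0<s\le t}s^{1-\alpha}\|T(s)\|<\infty$, whence $S(t)$ is a well-defined bounded operator with $\|S(t)x\|\lesssim t^{2\alpha-1}\|x\|$. (ii) By (P2) and the boundedness of $T(h)$ one checks $S(t)T(s)=T(s)S(t)$, and in particular $T(h)$ commutes with $S(t)$.

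For part (a), the goal is to evaluate the difference quotient defining the generator on the vector $S(t)x$. Rearranging (P3) with time variables $h$ and $t$ gives $T(h)S(t)=S(h)T(t)+\frac{h^{\alpha-1}}{\Gamma(\alpha)}S(t)-\frac{t^{\alpha-1}}{\Gamma(\alpha)}S(h)$, which I would substitute into $\big(h^{1-\alpha}T(h)S(t)x-\tfrac{1}{\Gamma(\alpha)}S(t)x\big)/h^\alpha$. After multiplying by $h^{1-\alpha}$ the term $\frac{h^{\alpha-1}}{\Gamma(\alpha)}S(t)x$ becomes $\frac{1}{\Gamma(\alpha)}S(t)x$ and cancels exactly against the subtracted $\frac{1}{\Gamma(\alpha)}S(t)x$, leaving
\begin{equation*}
h^{1-2\alpha}S(h)\,T(t)x-\frac{t^{\alpha-1}}{\Gamma(\alpha)}\,h^{1-2\alpha}S(h)x .
\end{equation*}
Everything then reduces to a single asymptotic lemma: $\lim_{h\to0^+}h^{1-2\alpha}S(h)y$ exists for every $y\in X$. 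This is the crux and the step I expect to be the main obstacle, because $S(h)y$ is a convolution of the two singular kernels $(h-s)^{\alpha-1}$ and (via (P1)) $s^{\alpha-1}$. I would handle it by writing $T(s)y=s^{\alpha-1}\big(\tfrac{y}{\Gamma(\alpha)}+\varepsilon(s)\big)$ with $\varepsilon(s)\to0$, substituting $s=hu$, and using the Beta integral $\int_0^1(1-u)^{\alpha-1}u^{\alpha-1}\,du=B(\alpha,\alpha)=\Gamma(\alpha)^2/\Gamma(2\alpha)$ together with dominated convergence to dispatch the $\varepsilon$-term. Once this limit is secured, the displayed quantity converges; by the very definition of $A$ this shows $S(t)x\in D(A)$, and tracking the Gamma constants yields (\ref{c1}).

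For part (b), I would deduce (\ref{c2}) from (\ref{c1}) by showing that $A$ commutes with $S(t)$ on $D(A)$. Given $x\in D(A)$, the quotient $\big(h^{1-\alpha}T(h)x-\tfrac{x}{\Gamma(\alpha)}\big)/h^\alpha$ converges to $Ax$; applying the bounded operator $S(t)$ and using $S(t)T(h)=T(h)S(t)$ from fact (ii), the image quotient $\big(h^{1-\alpha}T(h)S(t)x-\tfrac{1}{\Gamma(\alpha)}S(t)x\big)/h^\alpha$ converges both to $S(t)Ax$ and, by part (a), to $AS(t)x=T(t)x-\frac{t^{\alpha-1}}{\Gamma(\alpha)}x$. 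Equating the two limits gives $S(t)Ax=T(t)x-\frac{t^{\alpha-1}}{\Gamma(\alpha)}x$, which is precisely (\ref{c2}). The only delicate point is the interchange of $S(t)$ with the limit, which is immediate from its boundedness, so no separate closedness argument for $A$ is needed here.
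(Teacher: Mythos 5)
Note first that the paper itself gives no proof of this theorem; it is imported verbatim from the cited reference, so there is no in-paper argument to compare against and your proof has to be judged on its own terms. Your overall strategy is the right one and is surely essentially the source's: set $S(t):=J_t^\alpha T(t)$, rewrite (P3) with time variables $h,t$ as $T(h)S(t)=S(h)T(t)+\frac{h^{\alpha-1}}{\Gamma(\alpha)}S(t)-\frac{t^{\alpha-1}}{\Gamma(\alpha)}S(h)$, substitute into the difference quotient defining the generator so that the singular term cancels, and reduce everything to the asymptotics of $h^{1-2\alpha}S(h)y$. Your preliminary facts are sound: the uniform bound $\sup_{0<s\le t}s^{1-\alpha}\|T(s)\|<\infty$ does follow from (P1) and the uniform boundedness principle, the commutation $T(h)S(t)=S(t)T(h)$ follows from (P2) and boundedness of $T(h)$, and the Beta-integral/dominated-convergence computation of the key limit is correct. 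Part (b) then follows from part (a) exactly as you describe, the interchange of $S(t)$ with the limit being justified by boundedness alone.

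The one step that does not close as written is the final claim of part (a), that ``tracking the Gamma constants yields (\ref{c1}).'' Tracking them honestly gives $\lim_{h\to0^+}h^{1-2\alpha}S(h)y=\frac{1}{\Gamma(2\alpha)}y$, since $B(\alpha,\alpha)/\Gamma(\alpha)^2=1/\Gamma(2\alpha)$; hence the difference quotient converges to $\frac{1}{\Gamma(2\alpha)}\bigl(T(t)x-\frac{t^{\alpha-1}}{\Gamma(\alpha)}x\bigr)$, and with the generator exactly as defined in this paper you obtain $T(t)x=\frac{t^{\alpha-1}}{\Gamma(\alpha)}x+\Gamma(2\alpha)\,A S(t)x$, which agrees with (\ref{c1}) only when $\Gamma(2\alpha)=1$, i.e.\ $\alpha=\tfrac12$ or $\alpha=1$. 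The discrepancy is real but lies in the paper's transcription of the definition of the generator, which omits the normalizing factor $\Gamma(2\alpha)$ present in the source: on the scalar resolvent $T(t)=t^{\alpha-1}E_{\alpha,\alpha}(\lambda t^\alpha)$ (the model for the example at the end of the paper) the printed limit equals $\lambda/\Gamma(2\alpha)$, while (\ref{c1}) and (\ref{c2}) hold with $A=\lambda$. So your argument is correct in substance, but you must either restore the factor $\Gamma(2\alpha)$ in the definition of $A$ or accept the extra constant in (\ref{c1}); the assertion that the constants simply ``work out'' is false for the definitions as printed, and this is precisely the point your sketch glosses over.
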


\begin{theorem}\cite{Krasnoselskii1964}\label{the}(Krasnoselskii)
 Let $B$ be a closed convex and nonempty subset of
a Banach space $X$. Let $\mathcal {A}$ and $\mathcal {B}$ be two
operators such that \\
(i) $\mathcal {A}u +\mathcal {B}v\in B$ whenever $u,v \in B$; \\
(ii)$\mathcal {A}$ is a contraction mapping;\\
(iii)$\mathcal {B}$ is compact and continuous. \\
Then there exists $z\in B$ such that $z = \mathcal {A}z + \mathcal
{B}z$.
\end{theorem}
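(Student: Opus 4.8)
The plan is to derive Krasnoselskii's theorem by combining the two hypotheses through a \emph{decoupling} trick: for each fixed $y\in B$ we solve the equation $x=\mathcal{A}x+\mathcal{B}y$ in the variable $x$ by the Banach contraction principle, and then apply Schauder's fixed point theorem to the resulting solution map. First I would fix an arbitrary $y\in B$ and consider $\Phi_y(x)=\mathcal{A}x+\mathcal{B}y$. By hypothesis (i) the map $\Phi_y$ sends $B$ into $B$, and by (ii) it is a contraction with the same constant $k<1$ as $\mathcal{A}$. Since $B$ is a closed subset of the Banach space $X$, it is a complete metric space, so the Banach contraction principle gives a unique fixed point, which I call $Jy\in B$; it is characterized by $Jy=\mathcal{A}(Jy)+\mathcal{B}y$. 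This defines a map $J:B\to B$, and any fixed point $z$ of $J$ satisfies exactly $z=\mathcal{A}z+\mathcal{B}z$, which is the desired conclusion.

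Next I would record the basic Lipschitz estimate for $J$. Writing the defining relation at two points $y_1,y_2\in B$ and using the contraction bound for $\mathcal{A}$ gives
\begin{equation*}
\|Jy_1-Jy_2\|\leq k\|Jy_1-Jy_2\|+\|\mathcal{B}y_1-\mathcal{B}y_2\|,
\end{equation*}
whence $\|Jy_1-Jy_2\|\leq (1-k)^{-1}\|\mathcal{B}y_1-\mathcal{B}y_2\|$. Two consequences follow. First, since $\mathcal{B}$ is continuous by (iii), $J$ is continuous. Second, the value $Jy$ depends only on $\mathcal{B}y$, so there is a well-defined map $S$, Lipschitz with constant $(1-k)^{-1}$ on $\mathcal{B}(B)$, with $J=S\circ\mathcal{B}$; conceptually $S=(I-\mathcal{A})^{-1}$, the Lipschitz inverse that exists because $\mathcal{A}$ is a contraction.

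Finally I would verify relative compactness of $J(B)$ in order to invoke Schauder's theorem. Because $\mathcal{B}$ is compact, $\mathcal{B}(B)$ is relatively compact; applying the continuous map $S$ preserves relative compactness, so $J(B)=S(\mathcal{B}(B))$ is relatively compact. Thus $J$ is a continuous self-map of the nonempty closed convex set $B$ with relatively compact range, and Schauder's fixed point theorem furnishes $z\in B$ with $Jz=z$, i.e. $z=\mathcal{A}z+\mathcal{B}z$. The main obstacle is precisely this compactness step: one must resist trying to extract compactness directly from the implicit relation $Jy=\mathcal{A}(Jy)+\mathcal{B}y$, and instead factor $J$ as the composition of the compact operator $\mathcal{B}$ with the continuous solution operator $(I-\mathcal{A})^{-1}$, which is what the Lipschitz estimate above makes rigorous.
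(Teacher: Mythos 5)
The paper states this theorem as a classical result cited from Krasnoselskii's 1964 book and supplies no proof of its own, so there is no internal argument to compare against; your proposal must be judged on its own terms, and it is correct. It is in fact the standard proof: for fixed $y\in B$ the map $\Phi_y(x)=\mathcal{A}x+\mathcal{B}y$ sends $B$ into $B$ by (i) and is a $k$-contraction by (ii), and $B$ is complete because it is closed in $X$, so Banach's principle yields the solution operator $J$; the estimate $\|Jy_1-Jy_2\|\leq(1-k)^{-1}\|\mathcal{B}y_1-\mathcal{B}y_2\|$ gives continuity and the factorization $J=S\circ\mathcal{B}$ with $S=(I-\mathcal{A})^{-1}$; Schauder's theorem then applies. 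The one point worth tightening is the compactness step: $S$ is a priori defined only on $\mathcal{B}(B)$, not on its closure, so to conclude that $J(B)=S(\mathcal{B}(B))$ is relatively compact you should either extend $S$ by uniform continuity (it is Lipschitz and $B$ is complete) to the compact set $\overline{\mathcal{B}(B)}$, or argue directly that $S$ maps Cauchy sequences to Cauchy sequences, so every sequence in $J(B)$ has a convergent subsequence. This is a one-line repair, not a gap, and the overall argument stands.
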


\section{Existence of mild solution}

In this section, we shall prove the existence and uniqueness of
the mild solution of (\ref{equ}). To begin with, we introduce the notion mild solution.

\begin{definition}
A function $u\in C((0,T],X)$ is called a mild solution of equation
(\ref{equ}), if $J_t^\alpha u(t)\in D(A), \ t\in(0,T]$, and there
holds
\begin{align}\label{mild}
    u(t)=\frac{t^{\alpha-1}}{\Gamma(\alpha)}[x-g(u)]+
    AJ_t^\alpha u(t)+J_t^\alpha f(t,t^{1-\alpha}u(t),(Ku)(t)), \ t\in(0,T].
\end{align}
\end{definition}

\begin{lemma}
Suppose that $A$ generates a fractional resolvent $\{S(t)\}_{t>0}$.
Let $f\in C([0,T]\times X\times X)$. If $u\in C((0,T],X)$
is a mild solution of system (\ref{equ}), then
\begin{align}\label{mild2}
 u(t)=S(t)[x-g(u)]+\int_0^tS(t-s)f\big(s,s^{1-\alpha}u(s),(Ku)(s)d\sigma\big)ds;
\end{align}
conversely, if $u\in C((0,T],X)$ satisfies (\ref{mild2}), then $u$ is
a mild solution of system (\ref{equ}).
\end{lemma}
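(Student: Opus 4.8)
The plan is to prove the two implications separately, writing $g_\alpha(t)=\frac{t^{\alpha-1}}{\Gamma(\alpha)}$, $v=x-g(u)$, and abbreviating $f(s):=f(s,s^{1-\alpha}u(s),(Ku)(s))$; since $u\in C((0,T],X)$ is locally integrable near $0$ and $f$ is continuous, every fractional integral appearing below is well defined. The implication from (\ref{mild2}) to (\ref{mild}) will be a direct verification, whereas the reverse implication will be reduced to this computation together with a uniqueness statement for the associated homogeneous problem.

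For the direction (\ref{mild2})$\Rightarrow$(\ref{mild}), suppose $u(t)=S(t)v+\int_0^tS(t-s)f(s)\,ds=:S(t)v+w(t)$. First I would apply $J_t^\alpha$ to $w$ and interchange the two integrations by Fubini's theorem, obtaining
\[
J_t^\alpha w(t)=\int_0^t\!\Big(\int_0^{t-s}g_\alpha(t-s-\sigma)S(\sigma)\,d\sigma\Big)f(s)\,ds=\int_0^t\Psi(t-s)f(s)\,ds,
\]
where $\Psi(r):=\int_0^r g_\alpha(r-\sigma)S(\sigma)\,d\sigma=J_r^\alpha S(r)$. By Theorem \ref{Lith}(a), each $\Psi(r)y$ lies in $D(A)$ with $A\Psi(r)y=S(r)y-g_\alpha(r)y$; since $A$ is closed and the map $s\mapsto\big(S(t-s)-g_\alpha(t-s)\big)f(s)$ is Bochner integrable on $[0,t]$, I may pull $A$ through the integral to get $AJ_t^\alpha w(t)=w(t)-J_t^\alpha f(t)$, which simultaneously shows $J_t^\alpha w(t)\in D(A)$. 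Handling the homogeneous term directly by Theorem \ref{Lith}(a), namely $AJ_t^\alpha S(t)v=S(t)v-g_\alpha(t)v$, and adding the two identities yields $AJ_t^\alpha u(t)=u(t)-g_\alpha(t)v-J_t^\alpha f(t)$, which is exactly (\ref{mild}), and in particular $J_t^\alpha u(t)\in D(A)$.

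For the direction (\ref{mild})$\Rightarrow$(\ref{mild2}), let $u$ be a mild solution and set $\tilde u(t):=S(t)v+\int_0^tS(t-s)f(s)\,ds$, using the same $v$ and the same $f(s)$ read off from $u$; this $\tilde u$ again lies in $C((0,T],X)$ by (P1). By the implication just proved, $\tilde u$ is also a mild solution with identical data, so subtracting the two instances of (\ref{mild}) the difference $e:=u-\tilde u$ satisfies $J_t^\alpha e(t)\in D(A)$ and
\[
e(t)=AJ_t^\alpha e(t),\qquad t\in(0,T].
\]
It remains to conclude $e\equiv0$, and this homogeneous Volterra identity is the main obstacle. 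I would settle it by a uniqueness argument: taking Laplace transforms of $e(t)=A\int_0^tg_\alpha(t-s)e(s)\,ds$ and pulling $A$ out of the transform by closedness gives $(\lambda^\alpha-A)\hat e(\lambda)=0$, and since the generator $A$ of a fractional resolvent satisfies $\lambda^\alpha\in\rho(A)$ for $\mathrm{Re}\,\lambda$ large (the resolvent being the Laplace transform of $S$), the operator $\lambda^\alpha-A$ is injective there, forcing $\hat e\equiv0$ and hence $e\equiv0$. The delicate points here are the integrability of $e$ near $t=0$, which holds because $u$ and $\tilde u$ both have well-defined fractional integrals and are therefore integrable near $0$, and the passage from the finite interval $[0,T]$ to a Laplace-transformable function; the latter is taken care of by the standard local uniqueness theory for linear convolution Volterra equations with the resolvent kernel, so that no global extension of $e$ is actually needed.
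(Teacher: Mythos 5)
Your verification that \eqref{mild2} implies \eqref{mild} is sound and is essentially the paper's own argument for that direction: apply Theorem \ref{Lith}(a) to both terms and pull the closed operator $A$ through the Bochner integral. The problem is the other direction. You correctly reduce it to the uniqueness statement that $e\in C((0,T],X)$ with $J_t^\alpha e(t)\in D(A)$ and $e(t)=AJ_t^\alpha e(t)$ forces $e\equiv 0$, but the proof you offer for that statement does not go through under the paper's hypotheses. The Laplace transform step needs $S$ (equivalently $e$) to be Laplace transformable and needs $\lambda^\alpha\in\rho(A)$ for $\mathrm{Re}\,\lambda$ large; neither is available, since Definition \ref{de} imposes no exponential bound on $\{S(t)\}_{t>0}$ and the resolvent set of $A$ is not otherwise controlled. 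Your fallback, ``standard local uniqueness theory for linear convolution Volterra equations,'' also fails here: those arguments (Picard iteration, resolvent kernels) require the convolution operator $u\mapsto A(g_\alpha\ast u)$ to be iterated as a bounded or at least contractive map, and $A$ is an arbitrary closed unbounded operator, so there is no smallness to exploit. As written, the key step is asserted rather than proved.

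The gap is repairable by a purely local convolution argument, which is exactly what the paper does. Convolve the identity $e=A(g_\alpha\ast e)$ with $S$ and use Theorem \ref{Lith}(a) in the form $A(g_\alpha\ast S)(t)y=S(t)y-g_\alpha(t)y$ together with closedness of $A$ and the commutativity $S\ast A(g_\alpha\ast e)=A(g_\alpha\ast S\ast e)$ to obtain
\begin{align*}
S\ast e=S\ast A(g_\alpha\ast e)=A(g_\alpha\ast S)\ast e=(S-g_\alpha)\ast e ,
\end{align*}
hence $g_\alpha\ast e\equiv 0$ on $[0,T]$, and Titchmarsh's convolution theorem (the kernel $g_\alpha$ does not vanish near $0$) gives $e\equiv 0$ on $[0,T]$ with no extension beyond the finite interval and no transform needed. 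The paper applies this same device directly to $u$ rather than to the difference $e$: it computes $g_\alpha\ast u=S\ast(u-AJ_t^\alpha u)=g_\alpha\ast\bigl(S(\cdot)[x-g(u)]+S\ast f\bigr)$ and cancels $g_\alpha\ast{}$ by Titchmarsh, which avoids introducing $\tilde u$ altogether. Your decomposition into ``existence of one solution of \eqref{mild} with the given data'' plus ``uniqueness for the homogeneous problem'' is a perfectly reasonable alternative architecture, but the uniqueness half must be closed by the Titchmarsh argument, not by Laplace transforms.
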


\begin{proof}\ \
Assume that $u\in C((0,T],X)$ is a mild solution of (\ref{equ}).
Then, $g_\alpha(t)*u(t)=J_t^\alpha u(t)\in D(A)$. By (\ref{c1}), it
follows that
\begin{align}\label{1}
\nonumber\frac{t^{\alpha-1}}{\Gamma(\alpha)}*u(t)=&\bigg(S(t)-
Ag_\alpha(t)*S(t)\bigg)*u(t)\\
\nonumber=& S(t)*u(t)-AS(t)*g_\alpha(t)*u(t)\\
\nonumber=& S(t)*u(t)-S(t)*A\bigg(g_\alpha(t)*u(t)\bigg)\\
\nonumber=& S(t)*(u(t)-AJ_t^\alpha u(t))\\
\nonumber=& S(t)*\bigg(g_\alpha(t)x-g_\alpha(t)*f(t)\bigg)\\
=&
\frac{t^{\alpha-1}}{\Gamma(\alpha)}*
\bigg(S(t)[x-g(u)]+\int_0^tS(t-s)f\big(s,s^{1-\alpha}u(s),(Ku)(s)d\sigma\big)ds\bigg).
\end{align}
By Titchmarsh's theorem, we have
\begin{align*}
 u(t)=S(t)[x-g(u)]+\int_0^tS(t-s)f\big(s,s^{1-\alpha}u(s),(Ku)(s)d\sigma\big)ds.
\end{align*}

Assume that $u\in C((0,T],X)$ satisfies (\ref{mild2}). Then, by (a) of Theorem \ref{Lith}, it follows that
\begin{align*}
 J_t^\alpha u(t)
 =J_t^\alpha S(t)[x-g(u)]+g_\alpha(t)*S(t)*f\big(t,t^{1-\alpha}u(t),(Ku)(t)\big)\in
 D(A)
\end{align*}
and
\begin{align*}
 AJ_t^\alpha u(t)
 =&\bigg(S(t)-\frac{t^{\alpha-1}}{\Gamma(\alpha)}\bigg)[x-g(u)]
 +\bigg(S(t)-\frac{t^{\alpha-1}}{\Gamma(\alpha)}\bigg)*f\big(t,t^{1-\alpha}u(t),(Ku)(t)\big)\\
 =& S(t)(x-g(u))-\frac{t^{\alpha-1}}{\Gamma(\alpha)}(x-g(u))
 +S(t)*f\big(t,t^{1-\alpha}u(t),(Ku)(t)\big)\\
 &-\frac{t^{\alpha-1}}{\Gamma(\alpha)}*f\big(t,t^{1-\alpha}u(t),(Ku)(t)\big)\\
 =&
 u(t)-\frac{t^{\alpha-1}}{\Gamma(\alpha)}(x-g(u))
 -\frac{t^{\alpha-1}}{\Gamma(\alpha)}*f\big(t,t^{1-\alpha}u(t),(Ku)(t)\big).
\end{align*}
The proof is therefore completed.
\end{proof}

\begin{lemma}\label{iqq}
Assume that $x,\ y>0$ and $0<\gamma<1$. Then
\begin{align}\label{aa}
    |x^\gamma-y^\gamma|\leq |x-y|^\gamma.
\end{align}

\end{lemma}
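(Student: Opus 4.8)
The plan is to reduce the claimed inequality to the subadditivity of the map $t \mapsto t^\gamma$ on $[0,\infty)$ and then to establish that subadditivity by an elementary monotonicity argument. Since the statement is symmetric in $x$ and $y$, I would first assume without loss of generality that $x \ge y > 0$, so that $|x^\gamma - y^\gamma| = x^\gamma - y^\gamma$ and $|x-y|^\gamma = (x-y)^\gamma$. Setting $a = x - y \ge 0$ and $b = y > 0$, the target inequality $x^\gamma - y^\gamma \le (x-y)^\gamma$ rewrites as $(a+b)^\gamma \le a^\gamma + b^\gamma$, i.e. exactly the subadditivity of the $\gamma$-th power. The degenerate case $x = y$ (equivalently $a = 0$) is immediate, since $0^\gamma = 0$ for $\gamma > 0$.

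To prove $(a+b)^\gamma \le a^\gamma + b^\gamma$ for $a, b \ge 0$, I would fix $b > 0$ and introduce the auxiliary function $g(a) = a^\gamma + b^\gamma - (a+b)^\gamma$ on $[0,\infty)$. Then $g(0) = 0$, and differentiating gives $g'(a) = \gamma\big(a^{\gamma-1} - (a+b)^{\gamma-1}\big)$. Because $0 < \gamma < 1$ forces $\gamma - 1 < 0$, the power function $t \mapsto t^{\gamma-1}$ is strictly decreasing on $(0,\infty)$, whence $a^{\gamma-1} \ge (a+b)^{\gamma-1}$ and therefore $g'(a) \ge 0$. Thus $g$ is nondecreasing, so $g(a) \ge g(0) = 0$ for all $a \ge 0$, which is precisely the desired subadditivity. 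Tracing back through the substitution then yields the lemma.

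Alternatively, one may bypass calculus and argue directly from concavity: the map $\phi(t) = t^\gamma$ is concave on $[0,\infty)$ with $\phi(0) = 0$, so writing $a = \frac{a}{a+b}(a+b) + \frac{b}{a+b}\cdot 0$ and applying Jensen's inequality gives $\phi(a) \ge \frac{a}{a+b}\phi(a+b)$, and symmetrically $\phi(b) \ge \frac{b}{a+b}\phi(a+b)$; adding the two recovers $\phi(a) + \phi(b) \ge \phi(a+b)$. I expect no genuine analytic obstacle here: the only points needing care are the symmetry reduction and the boundary value $0^\gamma = 0$, so the \emph{hard part} is merely the bookkeeping of the reduction rather than any real difficulty in the estimate itself.
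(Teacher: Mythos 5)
Your proposal is correct and is essentially the paper's own argument: after the reduction to subadditivity $(a+b)^\gamma\le a^\gamma+b^\gamma$, your auxiliary function $g(a)=a^\gamma+b^\gamma-(a+b)^\gamma$ is, up to the normalization $z=a/b$ and a sign, exactly the paper's $g(z)=(z+1)^\gamma-z^\gamma-1$, and both proofs conclude by the same derivative-sign comparison with the value at $0$. The concavity/Jensen remark is a pleasant extra but not needed.
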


\begin{proof}\ \
Assume that $x>y$. Then (\ref{aa}) is equivalent to
\begin{align}\label{aaa}
   \bigg( \frac{x}{y}\bigg)^\gamma-1\leq \bigg(\frac{x}{y}-1\bigg)^\gamma.
\end{align}
Define function $g(z):=(z+1)^{\gamma}-z^\gamma-1, \ z>0$. We can
easily obtain that the derivative of $g$ at each $z>0$ satisfies
that $g'(z)=\gamma (z+1)^{\gamma-1}-\gamma z^{\gamma-1}<0.$ This
means that $g(z)$ is monotone-decreasing function. So we have
\begin{align*}
    g\bigg(\frac{x}{y}-1\bigg)=\bigg(\frac{x}{y}\bigg)^\gamma
    -\bigg(\frac{x}{y}-1\bigg)^\gamma-1\leq g(0)=0,
\end{align*}
that is, (\ref{aaa}) holds. The proof is therefore completed.
\end{proof}

Denote $M=\max_{t\in[0,T]}\|t^{1-\alpha}S(t)\|$, $N=\max_{0\leq
s\leq t\leq T}r(t,s)$, $P=\max_{0\leq s\leq t\leq T}m(t,s)$.\\
In order to derive our main results, the following hypothesis are introduced:\\
$(H_1)$ There exists two constants $\alpha_1,\
\alpha_2\in(0,\alpha)$ and real-valued functions $m_1(t)\in
L^{\frac{1}{\alpha_1}}([0,T],R)$, $ m_2(t)\in
L^{\frac{1}{\alpha_2}}([0,T],R) $ such that
\begin{align*}
    \|f(t,u_1,u_2)-f(t,v_1,v_2)\|\leq m_1(t)\|u_1-v_1\|+m_2(t)\|u_2-v_2\|.
\end{align*}\\
 ($H_2$) There exists a constant $\alpha_3\in(0,\alpha)$ and
real-valued function $h(t)\in L^{\frac{1}{\alpha_3}}([0,T],R)$ such
that
\begin{align*}
    \|f(t,u_1,v_1)\|\leq h(t), \ t\in [0,T],\ u_1,v_1\in X.
\end{align*}\\
$(H_3)$ There exists a constant $b$ such that
\begin{align*}
    \|g(u)-g(v)\|\leq b\|u-v\|_*, \ u,v\in C_{1-\alpha}([0,T],X).
\end{align*}\\

We denote $M_1=\|m_1\|_{L^{\frac{1}{\alpha_1}}([0,T],R)}$,
$M_2=\|m_2\|_{L^{\frac{1}{\alpha_2}}([0,T],R)}$ and
$H=\|h\|_{L^{\frac{1}{\alpha_3}}([0,T],R)}$.

\begin{theorem}\label{3.4}
Assume that $(H_1)-(H_3)$ hold. If
\begin{align*}
    \Omega:=Mb+
    \frac{MM_1T^{1-\alpha_1}}{\big(\frac{\alpha-\alpha_1}{1-\alpha_1}\big)^{1-\alpha_1}}
     +\frac{MM_2NT^{\alpha+1-\alpha_2}}{\alpha\big(\frac{\alpha-\alpha_2}{1-\alpha_2}\big)^{1-\alpha_2}}
    <1,
\end{align*}
then system (\ref{equ}) has a unique solution.
\end{theorem}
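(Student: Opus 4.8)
The plan is to recast the integral identity (\ref{mild2}) as a fixed-point problem on the Banach space $C_{1-\alpha}([0,T],X)$ equipped with the norm $\|\cdot\|_*$ and to apply the Banach contraction principle. Define the operator $\mathcal{F}$ on $C_{1-\alpha}([0,T],X)$ by
\begin{align*}
    (\mathcal{F}u)(t)=S(t)[x-g(u)]+\int_0^tS(t-s)f\big(s,s^{1-\alpha}u(s),(Ku)(s)\big)ds.
\end{align*}
By the preceding lemma, a fixed point of $\mathcal{F}$ is exactly a mild solution of (\ref{equ}), so it suffices to show that $\mathcal{F}$ is a well-defined contraction on $C_{1-\alpha}([0,T],X)$.

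First I would check that $\mathcal{F}$ maps $C_{1-\alpha}([0,T],X)$ into itself. The key elementary observation is the bound $\|S(t-s)\|\le M(t-s)^{\alpha-1}$, which is immediate from the definition $M=\max_{t\in[0,T]}\|t^{1-\alpha}S(t)\|$. For the first term, property (P1) of the fractional resolvent gives $t^{1-\alpha}S(t)[x-g(u)]\to [x-g(u)]/\Gamma(\alpha)$ as $t\to0^+$, so this term is continuous up to $t=0$; for the integral term, hypothesis $(H_2)$ combined with a H\"older estimate (using $h\in L^{1/\alpha_3}$ and $\alpha_3<\alpha$) shows that $t^{1-\alpha}\int_0^tS(t-s)f(\cdots)ds$ is finite, continuous on $[0,T]$, and vanishes at $t=0$. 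Hence $t\mapsto t^{1-\alpha}(\mathcal{F}u)(t)$ is continuous on $[0,T]$ and $\mathcal{F}u\in C_{1-\alpha}([0,T],X)$.

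The heart of the argument is the contraction estimate. For $u,v\in C_{1-\alpha}([0,T],X)$ I would write
\begin{align*}
    (\mathcal{F}u)(t)-(\mathcal{F}v)(t)=&\,S(t)[g(v)-g(u)]\\
    &+\int_0^tS(t-s)\big[f\big(s,s^{1-\alpha}u(s),(Ku)(s)\big)-f\big(s,s^{1-\alpha}v(s),(Kv)(s)\big)\big]ds,
\end{align*}
multiply by $t^{1-\alpha}$, and estimate the two pieces separately. The first term is bounded by $Mb\|u-v\|_*$ using $(H_3)$. For the integrand I invoke $(H_1)$, bound $\|s^{1-\alpha}u(s)-s^{1-\alpha}v(s)\|\le\|u-v\|_*$ directly, and control the kernel contribution by
\begin{align*}
    \|(Ku)(s)-(Kv)(s)\|\le N\int_0^s\sigma^{\alpha-1}\|\sigma^{1-\alpha}(u(\sigma)-v(\sigma))\|d\sigma\le\frac{Ns^\alpha}{\alpha}\|u-v\|_*.
\end{align*}
Substituting these bounds, applying H\"older's inequality with conjugate exponents $1/\alpha_i$ and $1/(1-\alpha_i)$ to the two resulting integrals, and evaluating $\int_0^t(t-s)^{(\alpha-1)/(1-\alpha_i)}ds$, I expect to arrive exactly at
\begin{align*}
    \|\mathcal{F}u-\mathcal{F}v\|_*\le\Omega\|u-v\|_*.
\end{align*}
Since $\Omega<1$ by hypothesis, $\mathcal{F}$ is a contraction and the Banach fixed-point theorem yields a unique fixed point, i.e. a unique mild solution of (\ref{equ}).

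The main obstacle I anticipate is bookkeeping rather than conceptual: producing the precise constants appearing in $\Omega$ requires carefully tracking the power of $t$ left over after each H\"older step and using $s^\alpha\le t^\alpha\le T^\alpha$ and $t\le T$ at exactly the right moments to reduce everything to a $t$-independent bound. The singular factor $(t-s)^{\alpha-1}$ together with the weight $t^{1-\alpha}$ is what produces the exponent $(\alpha-\alpha_i)/(1-\alpha_i)$ in the denominators of $\Omega$, and one must verify that $(t-s)^{(\alpha-1)/(1-\alpha_i)}$ is integrable near $s=t$; this holds precisely because $(\alpha-1)/(1-\alpha_i)>-1$, which is where the standing restriction $\alpha_i\in(0,\alpha)$ enters in an essential way.
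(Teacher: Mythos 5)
Your proposal is correct and follows essentially the same route as the paper: the same fixed-point operator on $C_{1-\alpha}([0,T],X)$ with the norm $\|\cdot\|_*$, the same use of (P1) and a H\"older estimate with $h\in L^{1/\alpha_3}$ for well-definedness, and the same contraction estimate splitting off the $g$-term via $(H_3)$ and handling the $m_1$, $m_2$ and kernel contributions exactly as you describe, including the $Ns^\alpha/\alpha$ bound and the role of $\alpha_i<\alpha$ in making $(t-s)^{(\alpha-1)/(1-\alpha_i)}$ integrable. The only place the paper spends more effort than your sketch is in proving continuity of the integral term at interior points (via a uniform-continuity argument for $t\mapsto t^{1-\alpha}S(t)$), but your outline covers the same ground.
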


\begin{proof}\ \
Consider the following operator:
\begin{align} \label{mainoperator}
   \nonumber(\mathcal {N} u)(t)=&S(t)[x-g(u)]\\
   &+\int_0^tS(t-s)f\big(s,s^{1-\alpha}u(s),(Ku)(s)\big)ds,\ u\in C_{1-\alpha}([0,T],X),\ t>0.
\end{align}
We shall first verify that $\mathcal
{N}:C_{1-\alpha}([0,T],X)\rightarrow C_{1-\alpha}([0,T],X)$. Let
$u\in C_{1-\alpha}([0,T],X)$, $t,\delta>0,$ $t+\delta\leq T$. We compute
\begin{align}\label{zero}
   \nonumber &\bigg\|t^{1-\alpha}(\mathcal {N} u)(t)-\frac{1}{\Gamma(\alpha)}[x-g(u)]\bigg\|\\
\nonumber\leq&
\bigg\|\bigg(t^{1-\alpha}S(t)-\frac{1}{\Gamma(\alpha)}\bigg)[x-g(u)]\bigg\|
+\bigg\|t^{1-\alpha}\int_0^tS(t-s)f\big(s,s^{1-\alpha}u(s),(Ku)(s)\big)ds\bigg\|\\
\nonumber\leq&
\bigg\|\bigg(t^{1-\alpha}S(t)-\frac{1}{\Gamma(\alpha)}\bigg)[x-g(u)]\bigg\|
+t^{1-\alpha}\int_0^t(t-s)^{\alpha-1}
\|(t-s)^{1-\alpha}S(t-s)\|\|h(s)\|ds\\
\nonumber\leq&
\bigg\|\bigg(t^{1-\alpha}S(t)-\frac{1}{\Gamma(\alpha)}\bigg)[x-g(u)]\bigg\|
+Mt^{1-\alpha}
\bigg(\int_0^t(t-s)^{\frac{\alpha-1}{1-\alpha_3}}ds\bigg)^{1-\alpha_3}
\bigg(\int_0^t\|h(s)\|^{\frac{1}{\alpha_3}}ds\bigg)^{\alpha_3}\\
=&
\bigg\|\bigg(t^{1-\alpha}S(t)-\frac{1}{\Gamma(\alpha)}\bigg)[x-g(u)]\bigg\|
+\frac{Mt^{1-\alpha_3}}{\bigg(\frac{\alpha-\alpha_3}{1-\alpha_3}\bigg)^{1-\alpha_3}
} \|h\|_{L^{\frac{1}{\alpha_3}}}.
\end{align}
Since $\lim_{t\rightarrow 0^+}\Gamma(\alpha)t^{1-\alpha}S(t)[x-g(u)]=x-g(u)$, the
inequality (\ref{zero}) implies that the limit $\lim_{t\rightarrow
0^+}t^{1-\alpha}(\mathcal {N} u)(t)$ exists and
\begin{align}\label{11}
    \lim_{t\rightarrow
0^+}t^{1-\alpha}(\mathcal {N} u)(t)=\frac{1}{\Gamma(\alpha)}[x-g(u)].
\end{align}

Denote $p_s(t)=t^{1-\alpha}(t-s)^{\alpha-1}$ and
$q_s(t)=(t-s)^{1-\alpha}S(t-s)$. Using Lemma \ref{iqq}, we deduce
the following inequality,
\begin{align}\label{no}
    \nonumber &\big\|(t+\delta)^{1-\alpha}S(t+\delta-s)-t^{1-\alpha}S(t-s))\big\|\\
    \nonumber =&\big\|p_s(t+\delta)q_s(t+\delta)-p_s(t)q_s(t)\big\|\\
    \nonumber \leq & \big\|p_s(t+\delta)q_s(t+\delta)-p_s(t+\delta)q_s(t)\big\|+\big\|p_s(t+\delta)q_s(t)-p_s(t)q_s(t)\big\|\\
    \nonumber \leq&|p_s(t+\delta)|\big\|q_s(t+\delta)-q_s(t)\big\|
    +M|p_s(t+\delta)-p_s(t)|\\
    \nonumber \leq& |p_s(t+\delta)|\big\|q_s(t+\delta)-q_s(t)\big\|
    +M|(t+\delta)^{1-\alpha}(t+\delta-s)^{1-\alpha}-(t+\delta)^{1-\alpha}(t-s)^{1-\alpha}|\\
    \nonumber &+M|(t+\delta)^{1-\alpha}(t-s)^{1-\alpha}-t^{1-\alpha}(t-s)^{1-\alpha}|\\
    \nonumber \leq & |p_s(t+\delta)|\big\|q_s(t+\delta)-q_s(t)\big\|
    +M(t+\delta)^{1-\alpha}|(t+\delta-s)^{1-\alpha}-(t-s)^{1-\alpha}|\\
    \nonumber &+M(t-s)^{1-\alpha}|(t+\delta)^{1-\alpha}-t^{1-\alpha}|\\
    \nonumber \leq& \sup_{0\leq s\leq t\leq T-\delta}\big\|q_s(t+\delta)-q_s(t)\big\| (t+\delta)^{1-\alpha}(t+\delta-s)^{\alpha-1}
    +M(t+\delta)^{1-\alpha}\delta^{1-\alpha}
    +M(t-s)^{1-\alpha}\delta^{1-\alpha}\\
    \leq& \sup_{0\leq s\leq t\leq T-\delta}\big\|q_s(t+\delta)-q_s(t)\big\| T^{1-\alpha}(t+\delta-s)^{\alpha-1}
    +2MT^{1-\alpha}\delta^{1-\alpha}.
\end{align}
Combining inequality (\ref{no}) and Lemma \ref{iqq}, we have
\begin{align}\label{conti}
   \nonumber &\bigg\|(t+\delta)^{1-\alpha}\int_0^{t+\delta}S(t+\delta-s)f\big(s,s^{1-\alpha}u(s),(Ku)(s)\big)ds\\
    \nonumber &-t^{1-\alpha}\int_0^tS(t-s)f\big(s,s^{1-\alpha}u(s),(Ku)(s)\big)ds\bigg\|\\
   \nonumber \leq& \int_0^t\big\|(t+\delta)^{1-\alpha}S(t+\delta-s)-t^{1-\alpha}S(t-s))\big\|h(s)ds
   +(t+\delta)^{1-\alpha}\int_t^{t+\delta}\|S(t+\delta-s)\|h(s)ds\\
   \nonumber\leq &\sup_{0\leq s\leq t\leq T-\delta}\big\|q_s(t+\delta)-q_s(t)\big\|
   T^{1-\alpha}\int_0^t(t+\delta-s)^{\alpha-1}h(t)dt\\
   \nonumber&+2MT^{1-\alpha}\delta^{1-\alpha}\int_0^th(s)ds
   +MT^{1-\alpha}\int_t^{t+\delta}(t+\delta-s)^{\alpha-1}h(s) ds\\
   \nonumber\leq &\sup_{0\leq s\leq t\leq T-\delta}\big\|q_s(t+\delta)-q_s(t)\big\| T^{1-\alpha}H\bigg(\int_0^t(t+\delta-s)^{\frac{\alpha-1}{1-\alpha_3}}dt\bigg)^{1-\alpha_3}
   +2MT^{1-\alpha}\delta^{1-\alpha}HT^{1-\alpha_3}\\
   \nonumber&+MT^{1-\alpha}H\bigg(\int_t^{t+\delta}(t+\delta-s)^{\frac{\alpha-1}{1-\alpha_3}}ds\bigg)^{1-\alpha_3}\\
   \nonumber=&\frac{ T^{1-\alpha}H\bigg[(t+\delta)^{\frac{\alpha-\alpha_3}{1-\alpha_3}}-\delta^{\frac{\alpha-\alpha_3}{1-\alpha_3}}\bigg]^{1-\alpha_3}\sup_{0\leq s\leq t\leq
   T-\delta}\big\|q_s(t+\delta)-q_s(t)\big\|}{\big(\frac{\alpha-\alpha_3}{1-\alpha_3}\big)^{1-\alpha_3}}\\
   \nonumber&+2MT^{1-\alpha}\delta^{1-\alpha}HT^{1-\alpha_3}
   +\frac{MT^{1-\alpha}H\delta^\frac{\alpha-\alpha_3}{1-\alpha_3}}{\big(\frac{\alpha-\alpha_3}{1-\alpha_3}\big)^{1-\alpha_3}}\\
   \nonumber\leq&\frac{
   T^{1-\alpha}H\bigg[(t+\delta-\delta)^{\frac{\alpha-\alpha_3}{1-\alpha_3}}\bigg]^{1-\alpha_3}\sup_{0\leq
   s\leq t\leq
   T-\delta}\big\|q_s(t+\delta)-q_s(t)\big\|}{\big(\frac{\alpha-\alpha_3}{1-\alpha_3}\big)^{1-\alpha_3}}\\
   \nonumber&+2MT^{1-\alpha}\delta^{1-\alpha}HT^{1-\alpha_3}
   +\frac{MT^{1-\alpha}H\delta^{\alpha-\alpha_3}}{\big(\frac{\alpha-\alpha_3}{1-\alpha_3}\big)^{1-\alpha_3}}\\
   \leq&\frac{
   HT^{1-\alpha_3}\sup_{0\leq
   s\leq t\leq
   T-\delta}\big\|q_s(t+\delta)-q_s(t)\big\|}{\big(\frac{\alpha-\alpha_3}{1-\alpha_3}\big)^{1-\alpha_3}}
   +2MT^{1-\alpha}\delta^{1-\alpha}HT^{1-\alpha_3}
   +\frac{MT^{1-\alpha}H\delta^{\alpha-\alpha_3}}{\big(\frac{\alpha-\alpha_3}{1-\alpha_3}\big)^{1-\alpha_3}}.
\end{align}
Since the function $t\mapsto t^{1-\alpha}S(t)$ is uniformly
continuous over $[0,T]$, we have that
\begin{align*}
    \lim_{\delta\rightarrow 0^+}\sup_{0\leq s\leq t\leq T-\delta}\big\|q_s(t+\delta)-q_s(t)\big\|=0.
\end{align*}
Let $\delta\rightarrow 0^+$, the right side of inequality
(\ref{conti}) tends to zero. We obtain that the function
$$t^{1-\alpha}\int_0^tS(t-s)f\big(s,s^{1-\alpha}u(s),(Ku)(s)\big)ds$$
is continuous on $[0,T]$. Observe that the function $t\mapsto t^{1-\alpha}(x-g(u))$ is
continuous on $[0,T]$.
The combination of (\ref{11}) and (\ref{conti}) implies
that $t^{1-\alpha}\mathcal {N} u)(t)$ is continuous on $[0,T]$,
which implies that $\mathcal {N} u\in C_{1-\alpha}([0,T],X)$.

Next, we shall prove that the operator $\mathcal {N}:
C_{1-\alpha}([0,T],X)\rightarrow C_{1-\alpha}([0,T],X)$ is
contraction mapping on $C_{1-\alpha}([0,T],X)$. Let $u,\ v\in
C_{1-\alpha}([0,T],X).$ We compute
\begin{align*}
    &\|\mathcal {N} u-\mathcal {N} v\|_*\\
    \leq&\max_{t\in[0,T]}\|t^{1-\alpha}S(t)\|\|g(u)-g(v)\|\\
    &+
    \max_{t\in[0,T]}t^{1-\alpha}\int_0^t\|S(t-s)\|\big\|f\big(s,s^{1-\alpha}u(s),(Ku)(s)\big)
    -f\big(s,s^{1-\alpha}v(s),(Kv)(s)\big)\big\|ds\\
    \leq
    &Mb\|u-v\|_*+M\max_{t\in[0,T]}
    t^{1-\alpha}\int_0^t(t-s)^{\alpha-1}m_1(s)\|s^{1-\alpha}\big(u(s)-v(s)\big)\|ds\\
    &+M\max_{t\in[0,T]}
    t^{1-\alpha}\int_0^t(t-s)^{\alpha-1}m_2(s)\int_0^s\|k(s,\sigma)(u(\sigma)-v(\sigma))\|d\sigma ds\\
    \leq & Mb\|u-v\|_*+M\max_{t\in[0,T]}
    t^{1-\alpha}\int_0^t(t-s)^{\alpha-1}m_1(s)ds\|u-v\|_*\\
    &+MN\max_{t\in[0,T]}
    t^{1-\alpha}\int_0^t(t-s)^{\alpha-1}m_2(s)\int_0^s\|u(\sigma)-v(\sigma)\|d\sigma ds\\
    \leq & Mb\|u-v\|_*+MM_1\max_{t\in[0,T]}
    t^{1-\alpha}\bigg(\int_0^t(t-s)^{\frac{\alpha-1}{1-\alpha_1}}ds\bigg)^{1-\alpha_1}\|u-v\|_*\\
    &+\frac{MN}{\alpha}\max_{t\in[0,T]}
    t^{1-\alpha}\int_0^t(t-s)^{\alpha-1}m_2(s)s^\alpha ds\|u-v\|_*\\
    \leq & Mb\|u-v\|_*+\frac{MM_1\max_{t\in[0,T]}
    t^{1-\alpha_1}}{
    \big(\frac{\alpha-\alpha_1}{1-\alpha_1}\big)^{1-\alpha_1}}
    \|u-v\|_*\\
    &+\frac{MNT^\alpha}{\alpha}\max_{t\in[0,T]}
    t^{1-\alpha}\int_0^t(t-s)^{\alpha-1}m_2(s)ds\|u-v\|_*\\
    \leq&Mb\|u-v\|_*+\frac{MM_1\max_{t\in[0,T]}
    t^{1-\alpha_1}}{
    \big(\frac{\alpha-\alpha_1}{1-\alpha_1}\big)^{1-\alpha_1}}
    \|u-v\|_*\\
    &+\frac{MNM_2T^\alpha}{\alpha}\max_{t\in[0,T]}
    t^{1-\alpha}\bigg(\int_0^t(t-s)^{\frac{\alpha-1}{1-\alpha_2}}ds\bigg)^{1-\alpha_2}\|u-v\|_*\\
    \leq & Mb\|u-v\|_*+
    \frac{MM_1T^{1-\alpha_1}}{\big(\frac{\alpha-\alpha_1}{1-\alpha_1}\big)^{1-\alpha_1}}
    \|u-v\|_*+\frac{MM_2N T^{\alpha+1-\alpha_2}}{\alpha\big(\frac{\alpha-\alpha_2}{1-\alpha_2}\big)^{1-\alpha_2}}
    \|u-v\|_*.
\end{align*}
So we have
\begin{align*}
    \|\mathcal {N} u-\mathcal {N} v\|_*
    \leq \Omega \|u- v\|_*.
\end{align*}
By Banach contraction principle, we can obtain that $\mathcal {N}$ has an
unique fixed point which is just the solution of system (\ref{equ}).
\end{proof}

\begin{theorem}\label{3.5}
Assume that $A$ generates a fractional resolvent and $H_1$-$H_3$
hold. If $Mb<1$ and there exists an $r>0$
such that
\begin{align*}
  M[\|x\|+\max_{u\in B_r}\|g(u)\|]+
\frac{MT^{1-\alpha_3}}{\bigg(\frac{\alpha-\alpha_3}{1-\alpha_3}\bigg)^{1-\alpha_3}
} \|h\|_{L^{\frac{1}{\alpha_3}}}\leq r.
\end{align*}
Then system (\ref{equ}) has at least one solution.

\end{theorem}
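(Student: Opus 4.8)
The plan is to apply Krasnoselskii's theorem (Theorem~\ref{the}) on the closed ball $B_r=\{u\in C_{1-\alpha}([0,T],X):\|u\|_*\le r\}$, which is a closed, convex and nonempty subset of the Banach space $C_{1-\alpha}([0,T],X)$. First I would split the operator $\mathcal{N}$ of (\ref{mainoperator}) as $\mathcal{N}=\mathcal{A}+\mathcal{B}$, where
\[
(\mathcal{A}u)(t)=S(t)[x-g(u)],\qquad
(\mathcal{B}u)(t)=\int_0^tS(t-s)f\big(s,s^{1-\alpha}u(s),(Ku)(s)\big)ds,
\]
so that any $z\in B_r$ with $z=\mathcal{A}z+\mathcal{B}z$ is exactly a mild solution of (\ref{equ}). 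It then remains to verify the three hypotheses (i)--(iii) of Theorem~\ref{the}.

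Conditions (i) and (ii) I expect to be routine once the estimates from the proof of Theorem~\ref{3.4} are available. For (i), take $u,v\in B_r$: bounding $\|t^{1-\alpha}(\mathcal{A}u)(t)\|\le M(\|x\|+\max_{w\in B_r}\|g(w)\|)$ and, exactly as in the computation (\ref{zero}) under $(H_2)$, $\|t^{1-\alpha}(\mathcal{B}v)(t)\|\le MT^{1-\alpha_3}\big(\tfrac{\alpha-\alpha_3}{1-\alpha_3}\big)^{-(1-\alpha_3)}\|h\|_{L^{1/\alpha_3}}$; adding these and invoking the standing inequality of the theorem gives $\|\mathcal{A}u+\mathcal{B}v\|_*\le r$, i.e. $\mathcal{A}u+\mathcal{B}v\in B_r$. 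For (ii), hypothesis $(H_3)$ yields $\|\mathcal{A}u-\mathcal{A}v\|_*\le Mb\|u-v\|_*$, and since $Mb<1$ the map $\mathcal{A}$ is a contraction.

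The substantive work is condition (iii): $\mathcal{B}$ must be continuous and compact on $B_r$. Continuity I would read off from $(H_1)$: if $u_n\to u$ in $C_{1-\alpha}$, then $s^{1-\alpha}u_n(s)\to s^{1-\alpha}u(s)$ uniformly and, since $\|(Ku_n-Ku)(s)\|\le N\|u_n-u\|_* s^{\alpha}/\alpha$, one controls the difference of integrands by the same Hölder argument as in (\ref{zero}), forcing $\|\mathcal{B}u_n-\mathcal{B}u\|_*\to0$. For compactness I would invoke the Arzel\`a--Ascoli theorem in $C_{1-\alpha}([0,T],X)$, which needs two things: uniform equicontinuity of the family $\{t\mapsto t^{1-\alpha}(\mathcal{B}u)(t):u\in B_r\}$, and, for each fixed $t\in[0,T]$, relative compactness of $\{t^{1-\alpha}(\mathcal{B}u)(t):u\in B_r\}$ in $X$. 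The equicontinuity is essentially already supplied by the estimate (\ref{conti}), whose right-hand side depends on $u$ only through the uniform bound $H$ and tends to $0$ as $\delta\to0^+$, with the behaviour near $t=0$ governed by (\ref{11}); hence it holds uniformly over $B_r$.

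The main obstacle is the pointwise relative compactness in $X$. Because $(H_2)$ only bounds $f$ and supplies no compactness, the convolution $\int_0^tS(t-s)f(\cdots)ds$ need not take values in a compact subset of an infinite-dimensional $X$ unless the resolvent itself is smoothing; so I expect to need the additional standing assumption that $\{S(t)\}_{t>0}$ is a \emph{compact} family (each $S(t)$ compact for $t>0$). Granting this, the standard device is to truncate $\int_0^t=\int_0^{t-\varepsilon}+\int_{t-\varepsilon}^t$: the tail is $O(\varepsilon^{(\alpha-\alpha_3)/(1-\alpha_3)})$ uniformly in $u\in B_r$ by the $(H_2)$ estimate, while the truncated part should lie in a relatively compact set. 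The delicate point, and where I would spend the most care, is that fractional resolvents obey the relation (P3) rather than a clean semigroup law $S(t-s)=S(\varepsilon)S(t-s-\varepsilon)$, so the compactness transfer onto the truncated integral must be justified through the defining properties (P1)--(P3) (or an integral representation of $S$) rather than a naive factorization. Once the truncated values are shown to be totally bounded and the tail is made small, letting $\varepsilon\to0^+$ gives relative compactness; with (i)--(iii) all verified, Theorem~\ref{the} then produces a fixed point $z\in B_r$, i.e. a mild solution of (\ref{equ}).
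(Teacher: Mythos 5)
Your decomposition $\mathcal{N}=\mathcal{A}+\mathcal{B}$ and your verifications of (i) and (ii) are exactly the paper's: the standing inequality together with (\ref{zero}) gives $\mathcal{A}u+\mathcal{B}v\in B_r$, and $(H_3)$ with $Mb<1$ makes $\mathcal{A}$ a contraction. Your treatment of (iii) is also the paper's in outline --- continuity of $\mathcal{B}$ from the Lipschitz estimates in the proof of Theorem \ref{3.4}, compactness via Arzel\`a--Ascoli using (\ref{zero}) and (\ref{conti}) --- but here you have put your finger on a genuine defect in the paper's own argument. The paper concludes that $\mathcal{B}$ is compact from uniform boundedness and equicontinuity alone; in an infinite-dimensional Banach space $X$ the Arzel\`a--Ascoli theorem also requires that for each fixed $t$ the set $\{t^{1-\alpha}(\mathcal{B}u)(t):u\in B_r\}$ be relatively compact in $X$, and nothing in $(H_1)$--$(H_3)$ or in the definition of a fractional resolvent supplies this. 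Your proposed remedy --- assume each $S(t)$, $t>0$, is compact and truncate $\int_0^t=\int_0^{t-\varepsilon}+\int_{t-\varepsilon}^t$ --- is the standard one, and your warning that the compactness cannot be transferred to the truncated integral through a naive factorization $S(t-s)=S(\varepsilon)S(t-s-\varepsilon)$ (which fails for fractional resolvents, as they satisfy only (P3)) is well taken; that step would need a subordination or integral representation of $S$, or a norm-continuity argument, and is not a formality. In short, your proof is the paper's proof plus an honest acknowledgement that the pointwise compactness step requires an additional hypothesis; as the theorem is stated, neither your argument nor the paper's closes that step, and the paper does not even flag it.
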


\begin{proof}\ \
We consider the operator $\mathcal
{N}:C_{1-\alpha}([0,T],X)\rightarrow C_{1-\alpha}([0,T],X)$ defined
by (\ref{mainoperator}). From the proof of the above theorem, we
know $\mathcal {N}$ is well defined. We dived $\mathcal {N}$ into
two operators
\begin{align*}
   \left\{
     \begin{array}{ll}
       (\mathcal {A}u)(t):= S(t)[x-g(u)], & \hbox{ } \\
       (\mathcal {B}u)(t):=
\int_0^tS(t-s)f\big(s,s^{1-\alpha}u(s),(Ku)(s)\big)ds. & \hbox{}
     \end{array}
   \right.
\end{align*}

Let $u$, $v\in B_r$. Assume that $t\in (0,T]$. By the inequality
(\ref{zero}), we have
\begin{align*}
&\|t^{1-\alpha}(\mathcal {A}u)(t)-t^{1-\alpha}(\mathcal {B}v)(t)\|\\
 \leq &\|t^{1-\alpha}S(t)\|[\|x\|+\|g(u)\|]+\bigg\|t^{1-\alpha}\int_0^tS(t-s)f\big(s,s^{1-\alpha}v(s),(Kv)(s)\big)ds\bigg\|\\
\leq&M[\|x\|+\max_{u\in B_r}\|g(u)\|]+
\frac{MT^{1-\alpha_3}}{\bigg(\frac{\alpha-\alpha_3}{1-\alpha_3}\bigg)^{1-\alpha_3}
} \|h\|_{L^{\frac{1}{\alpha_3}}}\leq r,
\end{align*}
which implies that $(i)$ of Theorem \ref{the} holds.

For any $u,\ v\in B_r$, we have that
\begin{align*}
    \|t^{1-\alpha}(\mathcal {A}u)(t)-t^{1-\alpha}(\mathcal
{A}v)(t)\| = & \|t^{1-\alpha}S(t)g(u)-t^{1-\alpha}S(t)g(v) \| \\
\leq &Mb \|u-v\|_*.
\end{align*}
The assumption $Mb<1$ implies that $(ii)$ of
Theorem \ref{the} holds.

Assume that $u_n, u\in B_r$, $n=1,2,\cdots$, $u_n \rightarrow u$ in
the norm of $B_r$. From the proof of Theorem \ref{3.4}, we derive
that
\begin{align*}
    \|t^{1-\alpha}(\mathcal {B}u_n)-t^{1-\alpha}(\mathcal {B}u)\|
\leq
\frac{MM_1T^{1-\alpha_1}}{\big(\frac{\alpha-\alpha_1}{1-\alpha_1}\big)^{1-\alpha_1}}
    \|u_n-u\|_*+\frac{MM_2N T^{\alpha+1-\alpha_2}}{\alpha\big(\frac{\alpha-\alpha_2}{1-\alpha_2}\big)^{1-\alpha_2}}
    \|u_n-u\|_*.
\end{align*}
Then, $\mathcal {B}$ is continuous. The combination of inequality
(\ref{zero}) and inequality (\ref{conti}) implies that $\mathcal
{B}$ is uniformly bounded and equicontinuous. By the Arzela-Ascoli's
theorem, $\mathcal {B}$ is compact. This means that $(iii)$ of
Theorem \ref{the} holds. The proof is completed directly by Theorem
\ref{the}.
\end{proof}

\begin{example}
As an application, we consider the following partial differential equations with Dirichlet
boundary conditions.
\begin{align}\label{expa}
    \left\{
      \begin{array}{ll}
        D_t^\alpha u(t,x)=k^2\frac{\partial^2 }{\partial x^2}u(t,x)+\frac{\mu_1t^{1-\alpha}|u(t,x)|}{1+t^{1-\alpha}|u(t,x)|}
        +\frac{\mu_2}{1+\big|\int_0^te^{t-s}u(s,x)ds\big|}, \ t\in (0,1], x\in (0,1) \\
        u(t,0)=u(t,1)=0, & \hbox{ }\\
       \lim_{t\rightarrow 0^+} J^{1-\alpha}_t u(t,x)=p(x)+g(u(\cdot,x)). & \hbox{ }
      \end{array}
    \right.
\end{align}
\end{example}

In order to write the system (\ref{expa}) as the abstract form of
system $(\ref{equ})$, we take

$\bullet$ $X=L^2(0,\pi)$;

$\bullet$ $A=k^2\frac{\partial^2}{\partial x^2}$ with domain
$D(A)=\{g\in W^{2,2}(0,1): g(0)=g(1)=0\}$.

$\bullet$ $r(t,s)=e^{t-s},\ 0\leq s\leq t\leq 1.$

$\bullet$ $f:I\times X\times X\rightarrow X$ defined by
\begin{align*}
    f(t,w_1(\cdot),w_2(\cdot))=\frac{\mu_1|w_1(\cdot)|}{1+|w_1(\cdot)|}+\frac{\mu_2}{1+|w_2(\cdot)|},\ t>0, \ w_1(\cdot), w_2(\cdot)\in X.
\end{align*}

Observe that $A$ is closed, densely defined and has eigenvalues
$\{\lambda_n=-k^2n^2\pi^2\}_{n\in N}$ with eigenfunctions $\{sin(n
x)\}_{n\in N}$. Moreover, we can obtain $\rho(A)=\mathbb{C}/
\{sin(kn\pi x)\}_{n\in N} $. For $g(x)=\sum_{n=1}^{\infty} g_n sin
(kn\pi x)$, we define the family $\{S(t)\}_{t>0}$ by
\begin{align*}
(S(t)g)(x)=\sum_{n=1}^{\infty}t^{1-\alpha}E_{\alpha,\alpha}(-k^2n^2\pi^2
t^\alpha)g_{n}\sin (kn\pi x).
\end{align*}
By \cite[Example 3.1]{Li2012a}, it follows that $\{S(t)\}_{t>0}$ generates a
fractional resolvent. By \cite{Schneider1996}, it follows that the function
$E_{\alpha,\alpha}(-\cdot)$ is complete monotonicity thereby monotone nonincreasing function over $(0,+\infty)$. Since $t\mapsto E_{\alpha,\alpha}(-\cdot)$ is continuous on $[0,+\infty)$,
function $E_{\alpha,\alpha}(-\cdot)$ is nonincreasing on $[0,+\infty)$. We
compute
\begin{align*}
    \|t^{1-\alpha}S(t)\|^2=&\sup_{g\in L^2(0,1)}\|t^{1-\alpha}T(t)g\|^2\\
            =&\sup_{g\in L^2(0,1),\|g\|=1}\int_0^1|(T(t)g)(x)|^2dx\\
            =&\sup_{g\in L^2(0,1),\|g\|=1}\sum_{n=1}^{\infty}\big(E_{\alpha,\alpha}(-k^2n^2\pi^2
t^\alpha)\big)^2\big(g_{n}\big)^2\\
\leq & \big(E_{\alpha,\alpha}(-k^2\pi^2 t^\alpha)\big)^2 \sup_{g\in
L^2(0,1),\|g\|=1}\sum_{n=1}^{\infty}\big(g_{n}\big)^2\\
\leq &\frac{1}{\Gamma(\alpha)^2}.
\end{align*}
This means that $M\leq \frac{1}{\Gamma(\alpha)}.$\\
We compute
\begin{align*}
    &\|f(t,w_1(\cdot),w_2(\cdot))-f(t,v_1(\cdot),v_2(\cdot))\|\\
    \leq &\mu_1\bigg\|\frac{|w_1(\cdot)|}{1+|w_1(\cdot)|}-\frac{|v_1(\cdot)|}{1+|v_1(\cdot)|}\bigg\|
    +\mu_2\bigg\|\frac{1}{1+|w_2(\cdot)|}-\frac{1}{1+|v_2(\cdot)|}\bigg\|\\
    \leq& \mu_1\|w_1(\cdot)-v_1(\cdot)\|+\mu_2\|w_2(\cdot)-v_2(\cdot)\|.
\end{align*}\\
(I) In the case that $g(u(\cdot,x))=\Sigma^l_{i=1}a_iu(t_i,x)$, we have
\begin{align*}
\|\Sigma^l_{i=1}a_iu(t_i,\cdot)-\Sigma^l_{i=1}a_iv(t_i,\cdot)\| \leq
&\Sigma^l_{i=1}a_it_i^{\alpha-1}\big\|t_i^{1-\alpha}u(t_i,\cdot)-t_i^{1-\alpha}v(t_i,\cdot)\big\|\\
\leq & \Sigma^l_{i=1}a_it_i^{\alpha-1}\|u-v\|_*;
\end{align*}
by Theorem \ref{3.4}, if there exist $\alpha_1,\ \alpha_2\in (0,\alpha)$ such that
\begin{align*}
    \frac{\mu_1}{\Gamma(\alpha)\bigg(\frac{\alpha-\alpha_1}{1-\alpha_1}\bigg)^{1-\alpha_1}}
    +\frac{\Sigma_{i=1}^la_it_i^{\alpha-1}}{\Gamma(\alpha)}+\frac{\mu_2e}{\Gamma(\alpha+1)
    \bigg(\frac{\alpha-\alpha_2}{1-\alpha_2}\bigg)^{1-\alpha_2}}<1,
\end{align*}
then system (\ref{equ}) has a unique solution.\\
(II) In the case that $$g(u(\cdot,x))=\frac{\Sigma^l_{i=1}a_i|u(t_i,x)|}{1+\Sigma^l_{i=1}a_i|u(t_i,x)|},$$ we have
\begin{align*}
\|g(u(\cdot,\cdot))-g(v(\cdot,\cdot))\| \leq
&\bigg\|\frac{\Sigma^l_{i=1}a_i|u(t_i,\cdot)|}{1+\Sigma^l_{i=1}a_i|u(t_i,\cdot)|}
-\frac{\Sigma^l_{i=1}a_i|v(t_i,\cdot)|}{1+\Sigma^l_{i=1}a_i|v(t_i,\cdot)|}\bigg\|\\
\leq&\big\|\Sigma^l_{i=1}a_i|u(t_i,\cdot)|-\Sigma^l_{i=1}a_i|v(t_i,\cdot)|\big\|\\
\leq&\Sigma^l_{i=1}a_it_i^{\alpha-1}\|t_i^{1-\alpha}u(t_i,\cdot)-t_i^{1-\alpha}v(t_i,\cdot)\|\\
\leq&\Sigma^l_{i=1}a_it_i^{\alpha-1}\|t_i^{1-\alpha}\|u-v\|_*;
\end{align*}
by Theorem \ref{3.5}, if
\begin{align*}
    \frac{\mu_1}{\Gamma(\alpha)\bigg(\frac{\alpha-\alpha_1}{1-\alpha_1}\bigg)^{1-\alpha_1}}<1,
\end{align*}
then system (\ref{equ}) has at least one solution.

\end{document}